\documentclass[conference, 10pt]{IEEEtran}
\IEEEoverridecommandlockouts

\usepackage{amsmath,amssymb,amsthm,mathtools,cite,balance}

\def\B { \mathcal{B} }
\def\L { \mathcal{L} }
\def\V { \mathcal{V} }
\def\R { \mathbb{R}  }
\def\N { \mathbb{N}  }
\def\C { \mathbb{C}  }
\def\Q { \mathbb{Q}  }
\def\comma{\text{, }}

\newtheorem{theorem}{Theorem}

\theoremstyle{definition}

\theoremstyle{remark}

\begin{document}  \onecolumn

\title{Transient Signal Spaces and Decompositions}
\author{\IEEEauthorblockN{Tarek A. Lahlou\IEEEauthorrefmark{1} and Anuran Makur\IEEEauthorrefmark{2}}
\IEEEauthorblockA{\IEEEauthorrefmark{1}Digital Signal Processing Group, \IEEEauthorrefmark{2}Claude E. Shannon Communication and Network Group}
\IEEEauthorblockA{Massachusetts Institute of Technology}}

\bstctlcite{IEEEexample:BSTcontrol}

\maketitle

\begin{abstract}
	In this paper, we study the problem of transient signal analysis. A signal-dependent algorithm is proposed which sequentially identifies the countable sets of decay rates and expansion coefficients present in a given signal. We qualitatively compare our method to existing techniques such as orthogonal exponential transforms generated from orthogonal polynomial classes. The presented algorithm has immediate utility to signal processing applications wherein the decay rates and expansion coefficients associated with a transient signal convey information. We also provide a functional interpretation of our parameter extraction method via signal approximation using monomials over the unit interval from the perspective of biorthogonal constraint satisfaction.
\end{abstract}

\tableofcontents

\section{Introduction} \label{sec:Introduction}
	
	Transient signal decomposition is a widely studied classical problem in signal processing. It concerns decomposing transient signals in terms of a countable set of decaying exponential signals. Analytically, this problem is trivially solved by the inverse Laplace transform. To observe this, consider a transient signal, $x:[0\comma\infty)  \rightarrow  \R$, defined by:
	\begin{equation} \label{Eq:Transient Signal}
		\forall t \geq 0\comma \enspace x(t) = \sum_{n \in \N}{\alpha_n e^{-\lambda_n t}}
	\end{equation}
	where $\left\{\lambda_n \in \left(0\comma\infty\right) : n \in \N\right\}$ is the set of decay rates of $x$, $\left\{\alpha_n \in \R : n \in \N\right\}$ is the set of corresponding expansion coefficients of $x$, and $\N \triangleq \left\{1\comma2\comma3\comma\dots\right\}$ denotes the set of natural numbers. The equality in \eqref{Eq:Transient Signal} holds in the sense of pointwise convergence of functions. Hence, we may recast it more formally as: 
	\begin{equation} \label{Eq:Pointwise Convergence}
		 \forall t \geq 0\comma \enspace x(t) = \lim_{k \rightarrow \infty}{\sum_{n = 1}^{k}{\alpha_n e^{-\lambda_n t}}} .  
	\end{equation}
	In the ensuing discussion, all equalities involving infinite summations of functions will refer to pointwise convergence as illustrated in \eqref{Eq:Pointwise Convergence}. 

	The (unilateral or one-sided) Laplace transform is a ubiquitous tool in applied mathematics and engineering. In systems theory, it is particularly useful in solving various differential equations with known initial conditions. We briefly introduce the Laplace transform, and refer readers to \cite{LaplaceTransform} for a rigorous and complete development of the subject. In addition, the pertinent measure and integration theory used in our discourse can be found in \cite{SteinRealAnalysis}. For a Borel measurable function $f:\left([0\comma\infty)\comma\B\left([0\comma\infty)\right)\right) \rightarrow \left(\R\comma\B\left(\R\right)\right)$, where $\B\left(X\right)$ denotes the Borel $\sigma$-algebra on a topological space $X$, the Laplace transform of $f$, denoted $F:\text{dom}\left(F\right) \rightarrow \C$, is defined as:
	\begin{equation} \label{Eq:Laplace Transform}
		\forall s \in \text{dom}\left(F\right)\comma \enspace F\left(s\right) \triangleq \int_{\left[0\comma\infty\right)}{f(t)e^{-st} \, d\mu(t)}
	\end{equation} 
	where $\mu$ denotes the Lebesgue measure and the integral is the Lebesgue integral. The domain of $F$, which is also known as the region of convergence (ROC) of the Laplace transform, is given by:
	\begin{equation*}
	 	\text{dom}\left(F\right) \triangleq \left\{s \in \C: \int_{\left[0\comma\infty\right)}{\left|f(t)e^{-st}\right| \, d\mu(t)} < \infty \right\}
	\end{equation*}
	which is the set of all $s \in \C$ such that $f(t)e^{-st}$ is Lebesgue integrable and the Laplace integral in \eqref{Eq:Laplace Transform} is finite. Observe that \eqref{Eq:Laplace Transform} can be construed as a decomposition of $F$ in terms of a ``basis'' of decaying exponential functions of $s$, $\left\{e^{-st}:t \in (0\comma\infty) \right\}$ (where we do not claim that $\left\{e^{-st}:t \in (0\comma\infty) \right\}$ is a Hamel basis in the linear algebraic sense or a Schauder basis in the Banach space sense \cite{BanachSpaceTheory}). With this interpretation, the action of the corresponding ``biorthogonal basis'' is given by the inverse Laplace transform. In other words, the inverse Laplace transform produces the expansion coefficients of $F$ corresponding to different decaying exponential ``basis'' functions. Guided by this intuition, we can construct:
	\begin{equation}
		\forall s \in \C\comma \enspace F_x\left(s\right) = \sum_{n \in \N}{\alpha_n e^{-\lambda_n s}}
	\end{equation}
	corresponding to the transient signal $x$ in \eqref{Eq:Transient Signal}. The inverse Laplace transform of $F_x:\C \rightarrow \C$ can be verified to be the generalized function (distribution or signed measure):
	\begin{equation} \label{Eq:Inverse Laplace Transform}
		f_x\left(t\right) = \sum_{n \in \N}{\alpha_n \delta\left(t - \lambda_n\right)}
	\end{equation}
	where $\delta\left(\cdot\right)$ denotes the Dirac delta function. For example, in the case where $\forall n \in \N\comma \enspace \alpha_n \geq 0$, we can prove \eqref{Eq:Inverse Laplace Transform} by first letting $s = i\omega$, $i \triangleq \sqrt{-1}$, $\omega \in \R$, to get a Fourier transform $F_x\left(i\omega\right)$, and then exploiting convergence results of inverse Fourier transforms (characteristic functions), such as L\'{e}vy's continuity theorem \cite{CinlarProbabilityTheory}, to deduce the weak convergence of the finite measures corresponding to the partial summations of \eqref{Eq:Inverse Laplace Transform}. By inspecting \eqref{Eq:Inverse Laplace Transform}, we can read off the decay rates and expansion coefficients of the transient signal $x$. Moreover, under appropriate regularity conditions, the uniqueness of the inverse Laplace transform ensures that transient signals have unique sets of decay rates and expansion coefficients. 

	Unfortunately, the inverse Laplace transform is numerically infeasible to compute, although it elegantly identifies the decay rates and expansion coefficients of transient signals. As a result, given a transient signal $x$ in \eqref{Eq:Transient Signal}, we are interested in identifying its decay rates $\left\{\lambda_n \in \left(0\comma\infty\right) : n \in \N\right\}$ and then extracting the corresponding expansion coefficients $\left\{\alpha_n \in \R : n \in \N\right\}$ in a computationally efficient manner. To this end, we first survey some well-known algorithms to solve this problem, and then present an alternative algorithm which overcomes some of the shortcomings faced by current methods.

\section{Previous Approaches}
	We now briefly review some well-established alternative methods from the literature dealing with both discrete and continuous exponential signal models, respectively.
	
	\subsection{Prony's Method and Descendants}
		Fitting a finite time series to a parametric model consisting of the sum of finitely many exponential terms underlies Prony's original method and forms the basis of its descendants \cite{Prony1, Prony2, Prony3, Prony4, Prony5, Prony6, Prony7, Prony8, Prony9}. Indeed, such methods typically rely in some key way on the relationship between the chosen signal model and the homogeneous solution to a difference equation in order to identify the pertinent decay rates via the intermediary factoring of an associated characteristic polynomial. Prior knowledge that the roots of this polynomial are purely real is difficult to leverage for nonharmonic signal models. Moreover, even in a finite-dimensional discrete-time context, extracting the expansion coefficients for known decay rates requires solving an ill-conditioned Vandermonde system \cite{VanderEst} for which specialized algorithms only alleviate some numerical instabilities \cite{LahlouOppenheim}. Methods of this class have been shown to be misaligned with maximum-likelihood estimates of exponential signal parameters in the presence of noise \cite{Bresler}. Pencil methods have reported improved robustness to noise due in part to a hybrid singular value decomposition and Prony-like approach \cite{MatrixPencil}.

	\subsection{Orthogonal Exponential Transforms} \label{Orthogonal Exponential Transforms}
		The approximation of arbitrary smooth functions over a finite interval using a real exponential power series is dissatisfying for primarily two reasons: the optimal approximation coefficients typically change with increasing model order, and the associated computations may require solving transcendental equations \cite{Gordon}. Alternatively, an orthogonal exponential transform constructed by translating an orthogonal polynomial class to the unit interval and performing a suitable change of variables is limited in that the set of possible decay rates $\left\{\lambda_n \in \left(0\comma\infty\right) : n \in \N\right\}$ is restricted to $\N$. 

		We now illustrate using Jacobi polynomials the construction of such an orthogonal exponential transform. The degree $n \geq 0$ Jacobi polynomial, $J_{n}^{(a\comma b)}  : [0\comma1]  \rightarrow  \R$, with degrees of freedom $a\comma b \in \R$, is given by the Rodrigues formula \cite{OrthogonalPolynomialsAskey, OrthogonalPolynomialsChihara, Morse53, MoreJacobi}: 
		\begin{equation} \label{Eq:Jacobi Polynomial}
	 		J_{n}^{(a\comma b)}(z) \triangleq \frac{\Gamma(b)z^{1-b}\left(1-z\right)^{b-a}}{\Gamma(b+n)}\frac{d^{n}}{dz^{n}}\left( z^{b+n-1}\left(1-z\right)^{a+n-b}\right)
	 	\end{equation}
	 	and further satisfies the recurrence relations \cite{OrthogonalPolynomialsAskey, OrthogonalPolynomialsChihara, Morse53, MoreJacobi}:
		\begin{align*}
	 		\frac{d}{dz}J_{n}^{(a\comma b)}(z) 	&= 	-\frac{n\left(n+a\right)}{b}J_{n-1}^{(a+1\comma b+1)}(z) \\
			zJ_{n}^{(a\comma b)}(z)				&= 	 \frac{b-1}{2n+a}\left(J_{n}^{(a-1\comma b-1)}(z)-J_{n+1}^{(a-1\comma b-1)}(z)\right)
	 	\end{align*}
	 	where $\Gamma(\cdot)$ denotes the gamma function. Degenerations of Jacobi polynomials commonly arising in signal processing contexts include Chebyshev $\left(a=b=-\frac{1}{2}\right)$ and Legendre ($a=b=0$) polynomials. The orthogonality of Jacobi polynomials on the closed unit interval, for $\{(a\comma b)\in\R^2 \colon a>0\comma  a+1>b\}$, is readily verified by checking that:
	 	\begin{equation*}
	 		\int_{[0\comma 1]} J_{m}^{(a\comma b)}(z)J_{n}^{(a\comma b)}(z)w(z)dz 
	 		 =  \frac{\Gamma(n)\Gamma^2\left(b\right)\Gamma\left(n+a-b+1\right)}{\left(a+2n\right)\Gamma\left(a+n\right)\Gamma\left(b+n\right)}\Delta[{m-n}]
	 	\end{equation*}
	 	where $\Delta[\cdot]$ is the Kronecker delta function, and $w \colon [0\comma 1] \rightarrow \R$ is the scaled beta density weighting function $w(z) = z^{b-1}\left(1-z\right)^{a-b}$.
	 	
	 	In translating Jacobi polynomials into a transient setting, we proceed using $a=b=2$ for simplicity (consistent with the presentations in \cite{OETFirst} and \cite{OET}) and set $z=e^{-t}$. Factoring and distributing the transformed weighting function into definition \eqref{Eq:Jacobi Polynomial}, the $n$th orthogonal exponential basis element, $\widetilde{J}_n:[0\comma \infty) \rightarrow \R\comma  n \in \N$, is defined by:
		\begin{equation}
	  		\forall t \geq 0\comma  \enspace \widetilde{J}_{n}(t) \triangleq \left(-1\right)^{n-1} \sqrt{2n^{3}}e^{-t}J_{n-1}^{(2\comma 2)}\left(e^{-t}\right).
	  \end{equation} 
	  	These polynomials in $e^{-t}$  form an orthogonal basis under the inner product later defined in \eqref{Standard Inner Product}. With an orthogonal basis in place, obtaining the expansion coefficients $\left\{\alpha_n \in \R : n \in \N\right\}$ for a given transient signal $x$ is a standard exercise. We simply decompose $x$ as:
		\begin{equation}
			 x(t) = \sum_{n \in \N}\left(\int_{[0\comma \infty)}{x\left(\tau\right)\widetilde{J}_{n}(\tau) \, d\tau} \right)\widetilde{J}_{n}(t)
		\end{equation}
		and after some straightforward manipulations, we obtain:
		\begin{equation}
			x(t) = \sum_{n \in \N}\sum_{k=1}^{n}\left( \int_{[0\comma \infty)}x\left(\tau\right)\widetilde{J}_{n}(\tau) \, d\tau \right)c_{n\comma k}e^{-kt} = \sum_{n \in \N} \alpha_{n}e^{-n t}
		\end{equation}
		where $c_{n\comma k}$ is the coefficient of the term $e^{-kt}$ in $\widetilde{J}_{n}(t)$. We conclude by remarking that selecting $a=b=2$ was not a limitation; in \cite{Armstrong}, an orthogonal exponential transform is designed with $a = 4$ and $b = 3$.

\section{Transient Signal Space and Its Properties}
	We now formally set up the space of transient signals. We assume that there is a fixed but unknown set of decay rates, $\Lambda \subset \left(0\comma \infty\right)$, that is countable and well-ordered. The well-ordering property of $\Lambda$ ensures that $\Lambda$ is a totally ordered set in which every non-empty subset contains a well-defined minimal element. This precludes sets akin to $\Q$ (the rational numbers). Moreover, the well-ordering endows the countable set $\Lambda = \left\{\lambda_n \in \left(0\comma \infty\right) : n \in \N\right\}$ with a natural enumeration: $0 < \lambda_1 < \lambda_2 < \cdots$, that we fix from hereon. This setup is more general than previous approaches. Indeed, the countable and well-ordering assumptions on $\Lambda$ subsume the case of $\Lambda \subseteq \N$, which is a necessary restriction made in the orthogonal polynomial approach in Subsection \ref{Orthogonal Exponential Transforms}.

	The space of transient signals, $\V(\Lambda)$, is parametrized by $\Lambda$, and is defined as the set of all functions $x:[0\comma \infty) \rightarrow \R$ satisfying \eqref{Eq:Transient Signal}, in a pointwise convergence sense, for some countable set of expansion coefficients $\left\{\alpha_n \in \R : n \in \N\right\}$ that is absolutely summable: 
	\begin{equation} \label{Eq:Absolute Summability Condition}
		\sum_{n \in \N}{\left|\alpha_n\right|} < \infty .
	\end{equation}
	The absolute summability of the expansion coefficients ensures that the limit in \eqref{Eq:Pointwise Convergence} converges to a finite real number for every $t \geq 0$, and permits us to use any arbitrary order of summation over $n \in \N$ in \eqref{Eq:Transient Signal}. We define $\L^2\left([0\comma \infty)\right)$ as the separable Hilbert space of Borel measurable real functions on $[0\comma \infty)$ equipped with the standard $\L^2\left([0\comma \infty)\right)$ inner product:
	\begin{equation} \label{Standard Inner Product}
		\forall f\comma g \in \L^2\left([0\comma \infty)\right)\comma  \enspace \left<f\comma g\right> \triangleq \int_{[0\comma \infty)}{f(t)g(t) \, d\mu(t)} .
	\end{equation}
	The next theorem presents some properties of the transient signal space $\V(\Lambda)$, and relates $\V(\Lambda)$ to $\L^2\left([0\comma \infty)\right)$.

	\begin{theorem}[Properties of Transient Signal Space] \label{Thm:Properties of Transient Signal Space}
		The transient signal space, $\V(\Lambda)$, has the following properties:
		\begin{enumerate} 
			\item Every signal $x \in \V(\Lambda)$ is Borel measurable and continuous.
			
			\item If each $x \in \V(\Lambda)$ represents the associated equivalence class of functions in $\L^2\left([0\comma \infty)\right)$ that are equal to $x$ $\mu$-almost everywhere, then $\V(\Lambda) \subsetneq \L^2\left([0\comma \infty)\right)$.
			
			\item $\V(\Lambda)$ is an inner product space equipped with the standard $\L^2\left([0\comma \infty)\right)$ inner product.
		\end{enumerate}
	\end{theorem}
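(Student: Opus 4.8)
The three assertions can be established in turn, each resting on the defining representation $x(t)=\sum_{n\in\N}\alpha_n e^{-\lambda_n t}$ together with the absolute summability $\sum_{n\in\N}|\alpha_n|<\infty$ and the existence of a smallest decay rate $\lambda_1=\min\Lambda>0$, which is guaranteed by the well-ordering of $\Lambda$. For the first property, I would note that $0<e^{-\lambda_n t}\le 1$ for every $t\ge 0$ and every $n$, so $|\alpha_n e^{-\lambda_n t}|\le|\alpha_n|$; the Weierstrass $M$-test then shows that the partial sums $S_k(t)=\sum_{n=1}^{k}\alpha_n e^{-\lambda_n t}$ converge uniformly on $[0,\infty)$ to $x$. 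Since each $S_k$ is a finite linear combination of the continuous functions $t\mapsto e^{-\lambda_n t}$, it is continuous, and a uniform limit of continuous functions is continuous; hence $x$ is continuous on $[0,\infty)$, and therefore Borel measurable.

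For the second property, the containment $\V(\Lambda)\subseteq\L^2\left([0,\infty)\right)$ follows from the pointwise estimate $|x(t)|\le\sum_{n\in\N}|\alpha_n|\,e^{-\lambda_n t}\le\bigl(\sum_{n\in\N}|\alpha_n|\bigr)e^{-\lambda_1 t}$, where we used $\lambda_n\ge\lambda_1$ and $t\ge 0$; squaring and integrating yields $\|x\|^2\le\bigl(\sum_{n\in\N}|\alpha_n|\bigr)^2/(2\lambda_1)<\infty$, so $x$ (which is Borel measurable by the first property) represents a genuine element of $\L^2\left([0,\infty)\right)$. I would also record that the map sending $x$ to its equivalence class is injective, since two distinct continuous functions on $[0,\infty)$ differ on a nonempty (relatively) open set and hence on a set of positive Lebesgue measure, so they cannot be equal $\mu$-almost everywhere. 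To see that the inclusion is proper, I would exhibit an $\L^2$ element outside $\V(\Lambda)$: the function $t\mapsto 1/(1+t)$ is continuous and square integrable but satisfies $e^{\lambda_1 t}/(1+t)\to\infty$, so it fails the decay bound $|x(t)|\le Ce^{-\lambda_1 t}$ obeyed by every member of $\V(\Lambda)$; alternatively, the indicator function of $[0,1]$ lies in $\L^2\left([0,\infty)\right)$ but its equivalence class contains no continuous representative and so cannot coincide with any $x\in\V(\Lambda)$.

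For the third property, it is enough to verify that $\V(\Lambda)$ is a linear subspace of $\L^2\left([0,\infty)\right)$, because the inner-product axioms --- bilinearity, symmetry, and positive definiteness (with $\left<x,x\right>=0$ forcing $x=0$ in $\L^2\left([0,\infty)\right)$) --- are automatically inherited by any subspace, and the finiteness of $\left<x,y\right>$ on $\V(\Lambda)\times\V(\Lambda)$ is immediate from Cauchy--Schwarz and the second property. Closure under scalar multiplication is obvious, and closure under addition follows because, for $x=\sum_{n\in\N}\alpha_n e^{-\lambda_n t}$ and $y=\sum_{n\in\N}\beta_n e^{-\lambda_n t}$ with $\sum_{n\in\N}|\alpha_n|<\infty$ and $\sum_{n\in\N}|\beta_n|<\infty$, absolute convergence permits a termwise combination giving $x+y=\sum_{n\in\N}(\alpha_n+\beta_n)e^{-\lambda_n t}$ with $\sum_{n\in\N}|\alpha_n+\beta_n|\le\sum_{n\in\N}|\alpha_n|+\sum_{n\in\N}|\beta_n|<\infty$, so $x+y\in\V(\Lambda)$; the zero signal arises from $\alpha_n\equiv 0$. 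The routine part here is the closure computation; the only steps I expect to require genuine care are the appeal to the minimal rate $\lambda_1$ in the $\L^2$ bound (the sole place the well-ordering of $\Lambda$ is used) and the check that passing to $\mu$-almost-everywhere equivalence classes preserves both the injectivity of the inclusion and its strictness, both of which are handled above.
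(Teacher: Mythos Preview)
Your proof is correct and follows the same overall architecture as the paper's: uniform convergence via the bound $|\alpha_n e^{-\lambda_n t}|\le|\alpha_n|$ for continuity, an $\L^2$ estimate using $\lambda_1=\min\Lambda$, and closure under linear combinations for the subspace claim. Two places where your route is slightly different are worth noting. For the $\L^2$ inclusion, the paper expands $x^2(t)$ as a Cauchy product (invoking Mertens' theorem) and then bounds the resulting double sum, whereas you bound $|x(t)|$ first by $\bigl(\sum_n|\alpha_n|\bigr)e^{-\lambda_1 t}$ and only then square and integrate; your version is more elementary and avoids the Cauchy-product machinery entirely. For the strictness of the inclusion, the paper picks $e^{-\lambda t}$ with $\lambda\notin\Lambda$ and appeals to uniqueness of the inverse Laplace transform to argue it lies outside $\V(\Lambda)$, while your witnesses $t\mapsto 1/(1+t)$ (violating the uniform decay bound $|x(t)|\le Ce^{-\lambda_1 t}$ that you have already established) and $\mathbf{1}_{[0,1]}$ (having no continuous representative) are self-contained and do not require any transform theory. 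Both differences make your argument shorter; the paper's choice of $e^{-\lambda t}$ has the mild conceptual advantage of showing that even exponentials with the ``wrong'' rate are excluded, which foreshadows the decomposition algorithm, but this is not needed for the theorem as stated.
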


	\begin{proof}
		Fix any transient signal $x \in \V(\Lambda)$ and let \eqref{Eq:Transient Signal} be its expansion. Then, $x$ is Borel measurable because it is the pointwise limit of continuous (and hence, Borel measurable) functions \cite{SteinRealAnalysis}. Moreover, observe that for any $k \in \N$:
		\begin{align*}
			\sup_{t \geq 0}{\left|x(t) - \sum_{n = 1}^{k}{\alpha_n e^{-\lambda_n t}}\right|} & = \sup_{t \geq 0}{\left|\lim_{m \rightarrow \infty}{\sum_{n = k+1}^{m}{\alpha_n e^{-\lambda_n t}}}\right|} \\
			& \leq \sup_{t \geq 0}{\sum_{n = k+1}^{\infty}{\left|\alpha_n\right| e^{-\lambda_n t}}} \\
			& = \sum_{n = k+1}^{\infty}{\left|\alpha_n\right|}
		\end{align*}  
		where the first equality follows \eqref{Eq:Transient Signal}, the second inequality follows from the continuity of $t \mapsto |t|$ and the triangle inequality, and the final equality holds because $\forall n \in \N\comma  \enspace e^{-\lambda_n t} \leq 1$ with equality if and only if $t = 0$. Hence, by letting $k \rightarrow \infty$, we see that \eqref{Eq:Transient Signal} holds in a uniform convergence sense due to \eqref{Eq:Absolute Summability Condition} \cite{Rudin}. Since the partial summations in \eqref{Eq:Pointwise Convergence} are continuous and converge uniformly to $x$, $x$ must be continuous by the uniform convergence theorem \cite{Rudin}. The continuity of $x$ also implies it is Borel measurable. This proves the first property. Now consider:
		\begin{align*}
			\int_{[0\comma \infty)}{x^2(t) \, d\mu(t)} & = \int_{[0\comma \infty)}{\sum_{n\comma m \in \N}{\alpha_n \alpha_m e^{-\left(\lambda_n + \lambda_m\right)t}} \, d\mu(t)} \\
			& \leq \int_{[0\comma \infty)}{\sum_{n\comma m \in \N}{\left|\alpha_n\right| \left|\alpha_m\right| e^{-\left(\lambda_n + \lambda_m\right)t}} \, d\mu(t)} \\
			& \leq \int_{[0\comma \infty)}{e^{-2\lambda_1 t} \sum_{n\comma m \in \N}{\left|\alpha_n\right| \left|\alpha_m\right|} \, d\mu(t)} < \infty
		\end{align*}
		where the first equality follows from  the Cauchy product corresponding to $x^2(t)$ after swapping the order of summations and changing dummy variables; these operations hold because the series in \eqref{Eq:Transient Signal} absolutely converges for every $t \geq 0$ due to \eqref{Eq:Absolute Summability Condition}, which implies that the Cauchy product absolutely converges for every $t \geq 0$ by Mertens' theorem and Exercise 13 in Chapter 3 of \cite{Rudin}. The second inequality follows from the triangle inequality, the continuity of $t \mapsto |t|$, and the monotonicity of the Lebesgue integral, and the third inequality follows from the well-ordered enumeration of $\Lambda$. The third expression is finite because $\lambda_1 > 0$ and the expansion coefficients satisfy \eqref{Eq:Absolute Summability Condition}. Hence, $x \in \V(\Lambda) \Rightarrow x \in \L^2\left([0\comma \infty)\right)$, which means that $\V(\Lambda) \subseteq \L^2\left([0\comma \infty)\right)$, where each continuous signal in $\V(\Lambda)$ (using the first property) uniquely represents the corresponding equivalence class of functions in $\L^2\left([0\comma \infty)\right)$ that are equal $\mu$-almost everywhere. Note that such equivalence classes are the vectors of $\L^2\left([0\comma \infty)\right)$ \cite{SteinRealAnalysis}, and any such equivalence class contains at most one continuous function. By a uniqueness of inverse Laplace transform argument as delineated in Section~\ref{sec:Introduction}, we can show that $e^{-\lambda t} \in \L^2\left([0\comma \infty)\right) \backslash \V(\Lambda)$ if $\lambda \notin \Lambda$ and $\lambda > 0$. This implies that $\V(\Lambda) \subsetneq \L^2\left([0\comma \infty)\right)$ (proper subset), which proves the second property.

		We now prove the third property. Recall that $\mathcal{C}\left([0\comma \infty)\right)$, the set of real continuous functions on $[0\comma \infty)$, is a vector space over $\R$ with equality defined pointwise, and $\V(\Lambda) \subsetneq \mathcal{C}\left([0\comma \infty)\right)$ by the first property. To verify that $\V(\Lambda)$ is a linear subspace of $\mathcal{C}\left([0\comma \infty)\right)$, we must check that $\V(\Lambda)$ contains the additive identity function, and is closed under addition of functions and multiplication by real scalars \cite{LinearAlgebraAxler}. The zero function (additive identity) is clearly in $\V(\Lambda)$. If $x \in \V(\Lambda)$ with expansion coefficients $\left\{\alpha_n \in \R : n \in \N\right\}$ and $y \in \V(\Lambda)$ with expansion coefficients $\left\{\beta_n \in \R : n \in \N\right\}$, then for every $a\comma b \in \R$, $\left\{a\alpha_n + b\beta_n \in \R : n \in \N\right\}$ is a valid set of expansion coefficients because it is absolutely summable by the triangle inequality. It is easily verified using \eqref{Eq:Transient Signal} that $ax + by$ has expansion coefficients $\left\{a\alpha_n + b\beta_n \in \R : n \in \N\right\}$, and hence, $ax+ by \in \V\left(\Lambda\right)$ and $\V\left(\Lambda\right)$ is a vector space over $\R$. By the second property, $\V\left(\Lambda\right)$ is a linear subspace of $\L^2\left([0\comma \infty)\right)$, and inherits the standard $\L^2\left([0\comma \infty)\right)$ inner product with the nuance that $\forall f \in \V\left(\Lambda\right)$, $\left<f\comma f\right> = 0$ implies that $f$ is the everywhere zero function. This subtlety arises because continuous functions in $\V\left(\Lambda\right)$ represent the corresponding equivalence classes of functions in $\L^2\left([0\comma \infty)\right)$. This proves the third property.
	\end{proof}

	In Theorem \ref{Thm:Properties of Transient Signal Space}, we are not concerned with whether $\V\left(\Lambda\right)$ is a closed linear subspace of $\L^2\left([0\comma \infty)\right)$ with respect to the standard $\L^2\left([0\comma \infty)\right)$-norm (which would imply that $\V\left(\Lambda\right)$ is complete, and therefore, a sub-Hilbert space of $\L^2\left([0\comma \infty)\right)$). This is because equality in $\V\left(\Lambda\right)$ is defined pointwise, whereas equality in $\V\left(\Lambda\right)$ when embedded in $\L^2\left([0\comma \infty)\right)$ is defined $\mu$-almost everywhere. The proof of correctness of our transient signal decomposition algorithm will rely critically on the pointwise convergence assumption. For this reason, we also do not pursue developing $\V\left(\Lambda\right)$ as the closed linear span of a Schauder basis of decaying exponential signals associated with $\Lambda$ \cite{BanachSpaceTheory}. We next illustrate some properties of transient signals that will naturally engender our algorithm as a corollary.

	\begin{theorem}[Properties of Transient Signals] \label{Thm:Properties of Transient Signals}
		Every transient signal $x \in \V\left(\Lambda\right)$ with expansion coefficients $\left\{\alpha_n \in \R : n \in \N\right\}$ satisfies:
		\begin{enumerate}
			\item Vanishing Property: $\displaystyle{\lim_{t \rightarrow \infty}{x(t)} = 0}$
			
			\item Coefficient Isolation Property: $\displaystyle{\lim_{t \rightarrow \infty}{e^{\lambda_1 t}x(t)} = \alpha_1}$
			
			\item Laplace Principle: $\displaystyle{\lim_{t \rightarrow \infty}{-\frac{1}{t}\log\left(\left|x(t)\right|\right)} = \lambda_1}$
		\end{enumerate}
		where $\log : (0\comma \infty) \rightarrow \R$ denotes the natural logarithm, and we assume without loss of generality that $\alpha_1 \neq 0$ for the third property.
	\end{theorem}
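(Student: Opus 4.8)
The plan is to prove the three properties in order, leaning throughout on the uniform convergence of the expansion \eqref{Eq:Transient Signal} established in Theorem~\ref{Thm:Properties of Transient Signal Space} and on the fixed strictly increasing enumeration $0 < \lambda_1 < \lambda_2 < \cdots$ of $\Lambda$; the coefficient isolation property will carry most of the weight, with the other two following by short additional arguments. For the vanishing property, I would fix $\varepsilon > 0$, use \eqref{Eq:Absolute Summability Condition} to choose $k \in \N$ with $\sum_{n > k}|\alpha_n| < \varepsilon/2$, and then bound, for every $t \geq 0$, $|x(t)| \leq \sum_{n=1}^{k}|\alpha_n| e^{-\lambda_n t} + \varepsilon/2$ using the triangle inequality and $e^{-\lambda_n t} \leq 1$. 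The finite sum is a linear combination of finitely many functions each tending to $0$ as $t \to \infty$, hence is eventually below $\varepsilon/2$, so $|x(t)| < \varepsilon$ for all large $t$; equivalently, uniform convergence permits interchanging $\lim_{t\to\infty}$ with $\sum_{n \in \N}$.

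For the coefficient isolation property, I would observe that for each fixed $t$ the series \eqref{Eq:Transient Signal} converges absolutely, so multiplying it by the scalar $e^{\lambda_1 t}$ is legitimate and yields $e^{\lambda_1 t} x(t) = \alpha_1 + \sum_{n \geq 2}\alpha_n e^{-(\lambda_n - \lambda_1) t}$. Since $\lambda_n \geq \lambda_2 > \lambda_1$ for every $n \geq 2$, for $t \geq 0$ we have $e^{-(\lambda_n - \lambda_1) t} \leq e^{-(\lambda_2 - \lambda_1) t}$, whence $|e^{\lambda_1 t} x(t) - \alpha_1| \leq e^{-(\lambda_2 - \lambda_1) t}\sum_{n \geq 2}|\alpha_n|$. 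The right-hand side tends to $0$ as $t \to \infty$ because $\lambda_2 - \lambda_1 > 0$ and \eqref{Eq:Absolute Summability Condition} holds, so $\lim_{t\to\infty} e^{\lambda_1 t} x(t) = \alpha_1$.

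For the Laplace principle, assume $\alpha_1 \neq 0$. The isolation property gives $e^{\lambda_1 t} x(t) \to \alpha_1 \neq 0$, so $x(t) \neq 0$ for all sufficiently large $t$ and $\log|x(t)|$ is eventually well-defined; moreover, by continuity of $\log$ at $|\alpha_1|$, we get $\log|e^{\lambda_1 t} x(t)| \to \log|\alpha_1|$, a finite constant. Writing $-\tfrac{1}{t}\log|x(t)| = \lambda_1 - \tfrac{1}{t}\log|e^{\lambda_1 t} x(t)|$ and noting that the last term vanishes as $t \to \infty$ since its numerator is bounded, we conclude $-\tfrac{1}{t}\log|x(t)| \to \lambda_1$. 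The only points needing care are the interchange of the limit with the infinite sum and the termwise multiplication of the series by $e^{\lambda_1 t}$; both are licensed by the absolute and uniform convergence already available from Theorem~\ref{Thm:Properties of Transient Signal Space}, so no genuine difficulty arises. The essential structural ingredient is the strict ordering of $\Lambda$, which supplies the spectral gap $\lambda_2 - \lambda_1 > 0$ driving the decay of the tail in the isolation property.
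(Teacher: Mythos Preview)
Your proposal is correct and follows essentially the same route as the paper: the key estimate $|e^{\lambda_1 t}x(t)-\alpha_1|\le e^{-(\lambda_2-\lambda_1)t}\sum_{n\ge 2}|\alpha_n|$ for the coefficient isolation property is identical, and your derivation of the Laplace principle from it matches the paper's argument verbatim in spirit. The only cosmetic difference is that for the vanishing property the paper uses the single-line bound $|x(t)|\le e^{-\lambda_1 t}\sum_{n\in\N}|\alpha_n|$ rather than your $\varepsilon/2$ tail-splitting, but both are equally valid.
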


	\begin{proof}
		Observe from \eqref{Eq:Transient Signal} that for every $t \geq 0$:
		$$ \left|x(t)\right| = \left|\sum_{n \in \N}{\alpha_n e^{-\lambda_n t}}\right| \leq \sum_{n \in \N}{\left|\alpha_n\right| e^{-\lambda_n t}} \leq e^{-\lambda_1 t} \sum_{n \in \N}{\left|\alpha_n\right|} $$
		where the first inequality follows from the triangle inequality and the continuity of $t \mapsto |t|$, and the second inequality holds because $\Lambda$ is well-ordered. Since the summation in the rightmost expression is finite due to (\ref{Eq:Absolute Summability Condition}), letting $t \rightarrow \infty$ produces the first property. Next, using the same sequence of steps with the aforementioned justifications, we again observe from \eqref{Eq:Transient Signal} that for every $t \geq 0$:
		\begin{align*} 
			\left|e^{\lambda_1 t} x(t) - \alpha_1\right| = \left|\sum_{n = 2}^{\infty}{\alpha_n e^{-\left(\lambda_n - \lambda_1\right) t}}\right| & \leq \sum_{n = 2}^{\infty}{\left|\alpha_n\right| e^{-\left(\lambda_n - \lambda_1\right) t}} \\
			& \leq e^{-\left(\lambda_2 - \lambda_1\right) t} \sum_{n = 2}^{\infty}{\left|\alpha_n\right|}
		\end{align*}
		from which we deduce the second property by letting $t \rightarrow \infty$.

		The third property is a variant of the Laplace principle from large deviations theory (Lemma 1.2.15 in \cite{LDP}), which is useful in deriving upper bounds of large deviation principles. Informally, it states that the rate of decay of a sum of decaying exponential functions is dominated by the slowest rate. We now prove the third property. Once again, we observe from \eqref{Eq:Transient Signal} that for every $t \geq 0$:
		$$ \frac{1}{t}\log\left(\left|x(t)\right|\right) = -\lambda_1 + \frac{1}{t}\log\left(\left|\alpha_1 + \sum_{n = 2}^{\infty}{\alpha_n e^{-\left(\lambda_n - \lambda_1\right) t}}\right|\right) .  $$ 
		Furthermore, we have:
		$$ \lim_{t \rightarrow \infty}{\log\left(\left|\alpha_1 + \sum_{n = 2}^{\infty}{\alpha_n e^{-\left(\lambda_n - \lambda_1\right) t}}\right|\right)} = \log\left(\left|\alpha_1\right|\right) $$
		using the continuity of $t \mapsto \log\left(|t|\right)$, and the first property applied to the summation (which is a signal in $\V\left(\left\{\lambda_n - \lambda_1 : n \in \N \backslash \{1\}\right\}\right)$, where $\left\{\lambda_n - \lambda_1 : n \in \N \backslash \{1\}\right\}$ forms a valid set of decay rates as $\Lambda$ is well-ordered). Since $\log\left(\left|\alpha_1\right|\right)$ is finite by the assumption that $\alpha_1 \neq 0$, letting $t \rightarrow \infty$ proves the third property.
	\end{proof}

	In the next section, we use Theorem \ref{Thm:Properties of Transient Signals} to derive our algorithm for performing transient signal decomposition.

\section{Computing Decay Rates and Expansion Coefficients}
	We now propose an iterative procedure through which we can recover the decay rates and expansion coefficients of any transient signal $x \in \V\left(\Lambda\right)$. \\
	 \\ 
	\textbf{Transient Signal Decomposition Algorithm:} \\
	Let $x_k:[0\comma \infty) \rightarrow \R$ be the processed signal after the $(k-1)$th iteration of the algorithm, $k \in \N$, so that $x_1 = x$. At the $k$th iteration, perform the following steps: 
	\begin{enumerate}
		\item Compute $\displaystyle{\gamma_k = \lim_{t \rightarrow \infty}{-\frac{1}{t}\log\left(\left|x_k(t)\right|\right)}}$.
	
		\item Compute $\displaystyle{\beta_k = \lim_{t \rightarrow \infty}{e^{\gamma_k t}x_k(t)}}$.
	
		\item Set $x_{k+1}(t) = x_k(t) - \beta_k e^{-\gamma_k t}$ for every $t \geq 0$.
		
	\end{enumerate}
	Repeat forever.  \\
	\\
	We note that termination at step $K  \in  \N$ identifies the $K-1$ most dominant rates in \eqref{Eq:Transient Signal}; if $x_K = 0$, then $x$ contained at most $K-1$ non-zero terms. The next theorem asserts that $\Lambda = \left\{\gamma_k : k \in \N\right\}$ and $\left\{\beta_k : k \in \N\right\}$ is the set of expansion coefficients of $x$.

	\begin{theorem}[Correctness of Algorithm] 
		For any transient signal $x \in \V\left(\Lambda\right)$ with expansion coefficients $\left\{\alpha_k \in \R : k \in \N\right\}$ such that $\forall k \in \N$, $\alpha_k \neq 0$ without loss of generality, the transient signal decomposition algorithm produces the correct decay rates and expansion coefficients: $\forall k \in \N$, $\gamma_k = \lambda_k$ and $\beta_k = \alpha_k$.
	\end{theorem}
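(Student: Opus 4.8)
The plan is to argue by induction on the iteration index $k$, maintaining the invariant that at the start of the $k$th iteration the processed signal is $x_k(t)=\sum_{n\ge k}\alpha_n e^{-\lambda_n t}$ for every $t\ge 0$, that $x_k\in\V(\Lambda_k)$ with $\Lambda_k\triangleq\{\lambda_n:n\ge k\}$, and that $\gamma_j=\lambda_j$ and $\beta_j=\alpha_j$ for all $j<k$. The base case $k=1$ is immediate, since $x_1=x\in\V(\Lambda)$ and $\Lambda_1=\Lambda$.

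For the inductive step I would first check that $\Lambda_k$ is a legitimate decay-rate set: as a subset of the well-ordered set $\Lambda$ it is countable and well-ordered, its inherited enumeration being $\lambda_k<\lambda_{k+1}<\cdots$ with minimal element $\lambda_k$; and $\{\alpha_n:n\ge k\}$ is absolutely summable because it is a tail of the absolutely summable sequence in \eqref{Eq:Absolute Summability Condition}. Hence $x_k\in\V(\Lambda_k)$, so Theorem \ref{Thm:Properties of Transient Signals} applies to $x_k$. Since $\alpha_k\ne 0$ by the standing assumption, the Laplace Principle (property 3, with $\lambda_k$ now playing the role of the smallest rate) gives $\gamma_k=\lim_{t\to\infty}-\frac1t\log|x_k(t)|=\lambda_k$, and the Coefficient Isolation Property (property 2) then gives $\beta_k=\lim_{t\to\infty}e^{\gamma_k t}x_k(t)=\lim_{t\to\infty}e^{\lambda_k t}x_k(t)=\alpha_k$; both limits in Steps 1 and 2 exist precisely because Theorem \ref{Thm:Properties of Transient Signals} guarantees them. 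Step 3 then defines $x_{k+1}(t)=x_k(t)-\beta_k e^{-\gamma_k t}=x_k(t)-\alpha_k e^{-\lambda_k t}$; because the series defining $x_k(t)$ converges absolutely for each fixed $t$, we may delete the $n=k$ term by rearrangement to obtain $x_{k+1}(t)=\sum_{n\ge k+1}\alpha_n e^{-\lambda_n t}$ pointwise, which is exactly the invariant at level $k+1$. Induction then yields $\gamma_k=\lambda_k$ and $\beta_k=\alpha_k$ for all $k\in\N$.

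The substance is therefore carried entirely by Theorem \ref{Thm:Properties of Transient Signals}; the only delicate points — and the main obstacle, modest as it is — are the bookkeeping items: verifying that every tail $\Lambda_k$ inherits well-ordering (so that a new dominant rate always exists and property 3 keeps applying to $x_k$), that absolute summability survives truncation, and that it is the pointwise convergence framework — not the $\L^2$ one — that licenses the term-by-term subtraction in Step 3 for every $t\ge 0$. I would also note the degenerate case flagged before the theorem: if only finitely many coefficients are nonzero, then once the last nonzero term is removed $x_{k+1}$ is the everywhere-zero signal and the limit in Step 1 is undefined; the hypothesis ``$\forall k\in\N$, $\alpha_k\ne 0$ without loss of generality'' is precisely what excludes this, reducing the claim to the clean induction above.
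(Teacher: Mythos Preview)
Your proof is correct and follows exactly the route the paper takes: an induction on $k$ that invokes the Laplace principle and the coefficient isolation property of Theorem~\ref{Thm:Properties of Transient Signals} applied to $x_k\in\V(\Lambda\backslash\{\lambda_1,\dots,\lambda_{k-1}\})$. Your write-up is in fact more explicit than the paper's about the bookkeeping (well-ordering of tails, absolute summability of truncated coefficient sequences, and the role of pointwise convergence in Step~3), all of which is handled correctly.
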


	\begin{proof} 
		The correctness of the transient signal decomposition algorithm follows inductively from the coefficient isolation property and Laplace principle in Theorem \ref{Thm:Properties of Transient Signals}. In particular, they justify the computations performed on $x_k \in \V\left(\Lambda \backslash \left\{\lambda_1\comma \dots\comma \lambda_{k-1}\right\}\right)$, $k \in \N$, at the $k$th iteration of the algorithm, where we interpret $\left\{\lambda_1\comma \dots\comma \lambda_{0}\right\}$ as the empty set. 
	\end{proof}

	We remark that an analogous algorithm can be designed for transient discrete-time signals and the rigorous development for the discrete case parallels the continuous case.

	\subsection{A Functional Interpretation of Expansion Coefficient Extraction} \label{Functional Interpretation}
		In the sequel, we focus on a particular method of expansion coefficient extraction for a given transient signal $x$ of the form \eqref{Eq:Transient Signal}, where $\Lambda$ is assumed known from the outset. A rigorous treatment of this method is outside the scope of this paper; our intent here is to provide insight into decomposing $x$ via the sequential application of appropriately defined linear functionals belonging to the dual space of $\V(\Lambda)$. To this end, we now state the integral operator property corresponding to mapping the so-called sifting property on the closed unit interval, $[0\comma 1]$, to the non-negative extended reals, $[0\comma \infty]$:
		\begin{equation}
			\int_{[0\comma 1]} \delta(z)x(z)dz = x(0) \xrightarrow{z = e^{-t}}	\int_{[0\comma \infty]} e^{-t} \delta \left( e^{-t} \right) x(t) dt \triangleq x(\infty)  \label{Sampling at Infinity}
		\end{equation}
		where the right hand side of this expression additionally serves as our definition for continuity at infinity since $[0\comma 1]$ is homeomorphic to $[0\comma \infty]$ with the translated topology. 

		We next assert that the transient signal $x$ is decomposable as:
		\begin{equation}
			x(t) = \sum_{n \in \N} R_{n}(x)e^{-\lambda_{n} t}
		\end{equation}
		where the linear functionals, $R_{n} \colon \V(\Lambda) \rightarrow \R$, $n \in \N$, are:
		\begin{equation} \label{Eq:R Definition}
			R_{n}(x) \triangleq \int_{[0\comma \infty]}  e^{-t}\delta\left(e^{-t}\right)e^{\lambda_{n} t}\left(x(t) -   \sum_{k = 1}^{n-1}  R_{k}(x)e^{-\lambda_{k}t}\right) dt.
		\end{equation}
		We sketch a justification of this claim (and hence the claim that $\forall n \in \N\comma  \enspace \alpha_{n} = R_{n}(x)$), by arguing that the linear functionals satisfy a set of conditions resembling biorthogonality: 
		\begin{equation} \label{Eq:Transient Biorthogonal}
			\forall n\comma k \in \N\comma  \enspace R_{n}\left(e^{-\lambda_kt}\right) = \Delta[n-k] = \left\{ \begin{array}{cc}1\comma  & n=k\\0\comma  & n\neq k\end{array}\right. .
		\end{equation}
		The possible scenarios of \eqref{Eq:Transient Biorthogonal} are summarized next using:
		\begin{equation*}
			R_{n}\left(e^{-\lambda_k t}\right) = \int_{[0\comma \infty]}e^{-\tau}\delta\left(e^{-\tau}\right) e^{\lambda_n \tau}  \left( e^{-\lambda_k \tau} - \sum_{j=1}^{n-1} R_{j}\left(e^{-\lambda_k t}\right)e^{-\lambda_j \tau} \right) d\tau .
		\end{equation*}  
		\begin{itemize}
			\item[1)]   For $k < n$: The expression inside the parentheses evaluates to the everywhere zero function (using induction), and hence the functional evaluates to zero by \eqref{Sampling at Infinity}.
			
			\item[2)]  For $k = n$: The functional evaluates to unity since the summation inside the integral evaluates to the everywhere zero function (using induction) reducing the expression to:
				\begin{equation}
					\int_{[0\comma \infty]}e^{-t}\delta\left(e^{-t}\right)e^{\left(\lambda_n - \lambda_k\right)t} \, dt = \int_{[0\comma \infty]}e^{-t}\delta\left(e^{-t}\right) \, dt = 1
				\end{equation}
				which follows from \eqref{Sampling at Infinity} and $\lambda_n = \lambda_k$.
			\item[3)] For $k > n$: The summation inside the integral evaluates to the everywhere zero function reducing the functional to:
				\begin{equation}
					\int_{[0\comma \infty]}e^{-t}\delta\left(e^{-t}\right)e^{\left(\lambda_n - \lambda_k\right)t} \, dt = 0
				\end{equation}
				which follows from \eqref{Sampling at Infinity} and $\lambda_n < \lambda_k$.
		\end{itemize}
		Making this claim precise requires only standard analysis in order to invoke an appropriate representation theorem. It is immediate from the previous discussion that the algorithm proposed in this paper is effectively evaluating the biorthogonal linear functionals associated with $\{e^{-\lambda_{n}t} \colon n \in \N \}$, where $\Lambda$ is additionally determined at runtime.

	\subsection{Connections to Monomial Approximation and Taylor Series} 
		We now elucidate several relationships between a modification of Taylor approximation using monomials (sans the constant monomial) and our transient signal decomposition algorithm when $\Lambda \subseteq \N$. In particular, consider a given function, $f \colon [0\comma 1] \rightarrow \R$, in the space defined by the span of the monomials $\left\{ z^n \colon n \in \N \right\}$, written as:
		\begin{equation}
			f(z) = \sum_{n \in \N}\alpha_{n} z^n .
		\end{equation}
		Analogous to Subsection~\ref{Functional Interpretation}, we assert that $f$ is decomposable as: 
		\begin{equation}
			f(z) = \sum_{n\in\mathbb{N}} Q_{n}(f) z^{n}
		\end{equation}
		where the linear functionals, $Q_n  :  \text{span} \left\{z^n  \colon  n \in \N \right\}  \rightarrow  \R$, $n \in \N$, are:
		\begin{equation} \label{Eq:Q Definition}
			Q_{n}(f) \triangleq \frac{(-1)^{n}}{\Gamma(n)}\int_{[0\comma 1]}\delta^{(n)}(z)f(z) \, dz
		\end{equation}
		where $\delta^{(n)}(\cdot)$ denotes the $n$th distributional derivative of the Dirac delta function. A proof of this claim argues that the linear functionals in \eqref{Eq:Q Definition} also satisfy a set of conditions resembling biorthogonality: 
		\begin{equation} \label{Eq:Monomial Biorthogonal}
			\forall n\comma k \in \N\comma  \enspace Q_{n}(z^{k}) = \Delta[n-k] = \left\{ \begin{array}{cc}1\comma  & n=k\\0\comma  & n\neq k\end{array}\right. .
		\end{equation}
		The possible scenarios of \eqref{Eq:Monomial Biorthogonal} are summarized next using:
		\begin{equation*}
			Q_{n}(z^{k}) = \frac{(-1)^{n}}{\Gamma(n)} \int_{[0\comma 1]} \delta^{(n)}(z)z^{k} \, dz .
		\end{equation*}  
		\begin{itemize}
			\item[1)]   For $k < n$: The functional evaluates to zero since the monomial $z^k$ is annihilated by the distributional derivative resulting in a sampling of the everywhere zero function by the sifting property in \eqref{Sampling at Infinity} on the left.
			
			\item[2)]  For $k = n$: The functional evaluates to unity since taking $n$ derivatives of the monomial $z^n$ yields an appropriately scaled constant valued function.
			
			\item[3)]  For $k > n$: The functional evaluates to zero despite the fact that the distributional derivative does not annihilate the monomial $z^k$, because the sampling occurs at $z=0$ for which all non-constant monomials are zero valued. 
		\end{itemize}

		These scenarios coincide with the development in the previous subsection, and by extension, the behavior of the algorithm proposed in this paper. We conclude with the observation that these relationships are characterized by mapping \eqref{Eq:Q Definition} into \eqref{Eq:R Definition} by first recasting the linear functionals, $\left\{Q_{n} : n \in \N\right\}$, as:
		\begin{equation}
			Q_{n}(f) = \int_{[0\comma 1]} \delta(z)z^{-n}\left( f(z) - \sum_{k = 1}^{n-1} Q_{k}(f)z^k \right) \, dz
		\end{equation}
		and then translating the domain $[0\comma 1]$ to $[ 0 \comma  \infty ]$ by substituting  $z = e^{-t}$.  Indeed, the appropriate sampling of higher order monomials $(k>n)$ parallels the limiting transient behavior of the exponential decay rate  $\lambda_n - \lambda_k$ while the annihilation or subtraction of lower order monomials is reminiscent of stage 3 in the proposed algorithm.

\cleardoublepage
\balance
\bibliographystyle{IEEEtran}
\bibliography{refs}

\end{document}